\newtheorem{theorem}{Theorem}[section]
\newtheorem{lemma}[theorem]{Lemma}
\newtheorem{prop}[theorem]{Proposition}
\newtheorem{remark}[theorem]{Remark}
\theoremstyle{definition}
\newtheorem{definition}[theorem]{Definition}
\newcommand{\Aut}{\operatorname{Aut}}
\newcommand{\spec}{\operatorname{spec}}
\newcommand{\Alt}{\operatorname{Alt}}
\newcommand{\diam}{\operatorname{diam}}
\newcommand{\St}{\operatorname{St}}
\newcommand{\rst}{\operatorname{rst}}
\newcommand{\h}{\hspace{2mm}}  %for white space in the mathematical environment
\title{On the finitely generated Hausdorff spectrum of spinal groups}
\date{\today}
\author{Elisabeth Fink}
\begin{document}

\selectlanguage{british}

\maketitle

\begin{abstract}
We study the finitely generated Hausdorff spectrum of spinal automorphism groups acting on rooted trees. Given
any $\alpha \in [0,1]$, we construct a branch group $G_\alpha$ such that $G_\alpha$ has a finitely generated subgroup
$H$ where $H$ has Hausdorff dimension $\alpha$ in $G$. Using results by Barnea, Shalev and
Klopsch we further deduce that the finitely generated Hausdorff
spectrum of this group $G_\alpha$ contains $\mathcal{L}_\alpha \cup ([0, 1] \cap \mathcal{L})$, where $\mathcal{L}$ is
a countable subset of $\mathbb{Q}$ and $\mathcal{L}_\alpha$ is a certain set of countably many irrational numbers in
the interval $[0,\alpha]$. This answers a question of Benjamin Klopsch \cite{klopsch}.
\end{abstract}

\section{Introduction}

Groups of automorphisms acting on rooted trees have been studied recently. Well known examples of such are the
Grigorchuk group and the Gupta-Sidki groups. In this paper we use the notion of a Hausdorff dimension, a fractal
dimension, to investigate the sizes of finitely generated subgroups in branch groups. Addressing the question of how
large such a branch group $G$ is within $\Aut(T)$, where $T$ is a rooted tree, Barnea and Shalev \cite{barnea_shalev}
have computed an explicit formula for the Hausdorff dimension. 

\medskip

Ab\'ert and Vir\'ag \cite{abert_virag} have shown that there exist finitely generated subgroups of $\Aut(T)$ with
arbitrary
Hausdorff dimension. In \cite{siegenthaler} Siegenthaler explicitly computed the Hausdorff dimension of level-transitive
spinal groups. Fernandez-Alcober and Zugadi-Reizabal \cite{zugadi_fernandez} give an explicit set of values for the
dimension of certain spinal groups. In his thesis \cite{klopsch} B. Klopsch has shown that branch groups have full
subgroup Hausdorff spectrum $[0,1]$. He leaves the question open whether the finitely generated Hausdorff spectrum can
be transcendental. Here we give for all $\alpha \in [0,1]$ an explicit example of a branch group $G_\alpha$ with a
finitely generated subgroup $H$, such that $\bar{H}$ has dimension $\alpha$ in $G_\alpha$.

\medskip

Further considerations yield that the finitely generated Hausdorff spectrum of this constructed group $G_\alpha$
contains $\mathcal{L}_\alpha \cup ([0, 1] \cap \mathcal{L})$, where $\mathcal{L}$ is a countable subset of $\mathbb{Q}$
and $\mathcal{L}_\alpha$ is a certain set of countably many irrational numbers in the interval $[0,\alpha]$. We do not
know whether the parameters of this construction can be chosen such that for all $\nu \in \mathcal{L}_\alpha \cup
\left([0,1] \cap \mathbb{Q}\right)$ there exists a finitely generated subgroup $H$ of Hausdorff dimension $\nu$ in $G$.

\medskip

We suspect that an alternative construction may give rise to groups whose Hausdorff spectrum is purely rational. In
either case, it is however clear that the spectrum can only contain countably many values, as there are only countably
many finitely generated subgroups of a finitely generated group.

\section{Fractal Dimensions in Branch Groups}

We give a quick introduction on Hausdorff dimensions and explain how they can be defined in profinite groups. We refer
the reader to Falconer \cite{falconer} for more information.

\medskip

Let $(X,d)$ be a metric space, let $Y \subset X$ and $\alpha, \rho \in \mathbb{R}^+$. Define

\[\mathcal{H}_\rho^\alpha(Y) = \inf \sum_i \left(\diam S_i\right)^\alpha,\] where $\left\{S_i\right\}_{i=0}^\infty$ is
a cover of $Y$ by sets of diameter at most $\rho$, and the infimum is taken over all such covers. Note that
$\mathcal{H}_\rho^\alpha(Y)$ is non-increasing with $\rho$, and so the limit \[\mathcal{H}^\alpha = \lim_{\rho
\rightarrow \infty} \mathcal{H}_\rho^\alpha(Y)\] exists. It can be verified that $\mathcal{H}^\alpha(Y)$ is an outer
measure on $X$, the \emph{$\alpha$-dimensional Hausdorff measure}.

\begin{lemma}
If $\mathcal{H}^\alpha(Y) < \infty$ and $\alpha < \alpha'$, then $\mathcal{H}^{\alpha'}(Y)=0$.
\end{lemma}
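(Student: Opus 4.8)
The plan is to compare the two Hausdorff pre-measures $\mathcal{H}^\alpha_\rho$ and $\mathcal{H}^{\alpha'}_\rho$ at one and the same scale $\rho$, and then to let that scale shrink. The single computational input is the elementary scaling inequality: if a set $S$ satisfies $\diam S \le \rho$ and $\alpha < \alpha'$, then, since $x \mapsto x^{\alpha'-\alpha}$ is increasing on $\mathbb{R}^+$, one has $(\diam S)^{\alpha'} = (\diam S)^{\alpha'-\alpha}(\diam S)^{\alpha} \le \rho^{\alpha'-\alpha}(\diam S)^{\alpha}$, using the convention $0^{t}=0$ for $t>0$ so that degenerate sets cause no trouble.

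First I would fix $\rho>0$ and take an arbitrary cover $\{S_i\}_{i=0}^\infty$ of $Y$ by sets of diameter at most $\rho$. Summing the scaling inequality over $i$ gives $\sum_i (\diam S_i)^{\alpha'} \le \rho^{\alpha'-\alpha}\sum_i (\diam S_i)^{\alpha}$, and passing to the infimum over all such covers on both sides yields $\mathcal{H}^{\alpha'}_\rho(Y) \le \rho^{\alpha'-\alpha}\,\mathcal{H}^{\alpha}_\rho(Y)$. Next I would invoke that $\mathcal{H}^{\alpha}_\rho(Y)$ is non-increasing in $\rho$ and that $\mathcal{H}^\alpha(Y)$ is its limiting value as the scale shrinks, so $\mathcal{H}^{\alpha}_\rho(Y) \le \mathcal{H}^\alpha(Y)$ for every $\rho>0$; combined with the hypothesis $\mathcal{H}^\alpha(Y)<\infty$ this gives $\mathcal{H}^{\alpha'}_\rho(Y) \le \rho^{\alpha'-\alpha}\,\mathcal{H}^\alpha(Y)$. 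Since $\alpha'-\alpha>0$, the right-hand side tends to $0$ as $\rho\to 0$, and therefore $\mathcal{H}^{\alpha'}(Y)=0$.

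I do not anticipate a genuine obstacle: the whole argument is a two-line estimate once the pre-measures are lined up at equal scale. The only points worth a word of care are the bookkeeping for sets of zero diameter (and a fortiori the trivial cover), which the stated convention absorbs, and the edge case $\mathcal{H}^\alpha(Y)=0$, where the conclusion is immediate from the same inequality. It is perhaps worth remarking that this lemma is precisely what makes the \emph{Hausdorff dimension} of $Y$ well defined, as the common value $\inf\{\alpha:\mathcal{H}^\alpha(Y)=0\}=\sup\{\alpha:\mathcal{H}^\alpha(Y)=\infty\}$, which is the quantity used throughout the remainder of the paper.
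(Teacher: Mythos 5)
Your argument is correct and complete: it is the standard proof of this lemma, and in fact the paper itself offers no proof at all, simply quoting the statement from the background material in Falconer, so there is nothing to diverge from. One small remark: your step $\mathcal{H}^{\alpha}_\rho(Y)\le\mathcal{H}^{\alpha}(Y)$ and the final passage $\rho\to 0$ rely on $\mathcal{H}^{\alpha}(Y)$ being the limit of $\mathcal{H}^{\alpha}_\rho(Y)$ as $\rho\to 0^{+}$ (equivalently its supremum over $\rho>0$, by monotonicity); this is the correct reading, and the paper's ``$\lim_{\rho\to\infty}$'' is evidently a typo that your write-up silently fixes.
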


We can now define the \emph{Hausdorff dimension} of a set $Y \subset X$:

\[\dim_H(Y) = \sup\left\{\alpha | \mathcal{H}^\alpha(Y)=\infty\right\} = \inf\left\{\alpha |
\mathcal{H}^\alpha(Y)=0\right\}.\]

A \emph{filtration} of $G$ is a descending chain of open normal subgroups $G=G_0 \geq G_1 \geq \dots \geq G_n \geq
\dots$ which forms a base of the neighborhoods of the identity. For such a series we have $\bigcap_{n=0}^\infty G_n =
\{1\}$. Now, let $G$ be a profinite group, equipped with a filtration $G_n$. Define an invariant metric $d$ on $G$ by
\[d(x,y)=\inf\left\{|G/G_n|^{-1} | xy^{-1} \in G_n\right\}.\] With respect to these definitions, Barnea and Shalev
proved the following theorem:

\begin{theorem}
Let $G$ be a profinite group with a filtration $\left\{G_n\right\}_{n=0}^\infty$ and let $H \leq G$ be a closed
subgroup. Then \[\dim_G(H)= \liminf_{n \rightarrow \infty} \frac{\log|H/\left(H \cap G_n\right)}{\log|G/G_n|},\] where
the Hausdorff dimension is computed with respect to the metric associated with the filtration $\left\{G_n\right\}$.
\end{theorem}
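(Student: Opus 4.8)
The plan is to prove the two inequalities $\dim_G(H)\le\beta$ and $\dim_G(H)\ge\beta$ separately, where I abbreviate $\beta=\liminf_{n\to\infty}\beta_n$ and $\beta_n=\log|H/(H\cap G_n)|\big/\log|G/G_n|$. If $G$ is finite the statement is vacuous, and otherwise $|G/G_n|\to\infty$ because the $G_n$ form a filtration; one may also pass to the strictly decreasing subchain of the $G_n$, which does not change the metric $d$. The first step is to record the geometry of $d$: since the $G_n$ are nested, $d$ is non-archimedean, the closed ball of radius $|G/G_n|^{-1}$ about any point is exactly a coset $xG_n$, such a coset has diameter $|G/G_n|^{-1}$, and two cosets (of possibly different $G_m$'s) are either disjoint or nested. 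Hence any set $S$ with $\diam S\le\rho$ is contained in a single coset $C$ of the $G_n$ with $n$ least such that $|G/G_n|^{-1}\le\rho$, and $\diam C\le\diam S$. Finally, the map $h\mapsto hG_n$ shows that exactly $|H/(H\cap G_n)|$ cosets of $G_n$ meet $H$, and that $hG_n\cap H$, when nonempty, is a coset of the subgroup $H\cap G_n$, which has finite index in $H$ since $H$ is compact and $G_n$ is open.

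For the upper bound, fix $\alpha>\beta$ and pick $\alpha'$ with $\beta<\alpha'<\alpha$, so that $\beta_n<\alpha'$ for infinitely many $n$. For each such $n$, the $|H/(H\cap G_n)|$ cosets of $G_n$ meeting $H$ cover $H$ by sets of diameter $\rho_n:=|G/G_n|^{-1}$, whence $\mathcal{H}^\alpha_{\rho_n}(H)\le|H/(H\cap G_n)|\cdot|G/G_n|^{-\alpha}=|G/G_n|^{\beta_n-\alpha}<|G/G_n|^{\alpha'-\alpha}$, and the last expression tends to $0$ along this subsequence since $|G/G_n|\to\infty$ and $\alpha'-\alpha<0$. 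As $\rho_n\to0$ and the limit defining $\mathcal{H}^\alpha(H)$ exists, $\mathcal{H}^\alpha(H)=\lim_n\mathcal{H}^\alpha_{\rho_n}(H)=0$, so $\dim_G(H)\le\alpha$; letting $\alpha\downarrow\beta$ yields $\dim_G(H)\le\beta$.

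For the lower bound I would invoke the mass distribution principle. The bound is trivial if $\beta=0$, so fix $\alpha$ with $0<\alpha<\beta$ and then $\alpha'$, $n_0$ with $\alpha<\alpha'<\beta$ and $|H/(H\cap G_n)|\ge|G/G_n|^{\alpha'}$ for all $n\ge n_0$. Let $\mu$ be the normalized Haar measure on the compact group $H$. Since $\mathcal{H}^\alpha(H)=\lim_{\rho\to0}\mathcal{H}^\alpha_\rho(H)$, it suffices to bound $\mathcal{H}^\alpha_\rho(H)$ from below for all sufficiently small $\rho$. Once $\rho$ is small enough, every set $S$ with $\diam S\le\rho$ lies in a coset $C$ of some $G_n$ with $n\ge n_0$, and $\diam C\le\diam S$; if moreover $S$ meets $H$, then by the facts above $\mu(S)=\mu(S\cap H)\le\mu(C\cap H)=|H/(H\cap G_n)|^{-1}\le|G/G_n|^{-\alpha'}\le|G/G_n|^{-\alpha}=(\diam C)^\alpha\le(\diam S)^\alpha$. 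Hence for any cover $\{S_i\}$ of $H$ with all $\diam S_i\le\rho$ we get $1=\mu(H)\le\sum_i\mu(S_i)\le\sum_i(\diam S_i)^\alpha$, so $\mathcal{H}^\alpha_\rho(H)\ge1$ and therefore $\mathcal{H}^\alpha(H)\ge1>0$, i.e.\ $\dim_G(H)\ge\alpha$. Letting $\alpha\uparrow\beta$ gives $\dim_G(H)\ge\beta$, and combining the two bounds proves the theorem.

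I expect the step needing the most care to be the lower bound: one must recognise that in this ultrametric the metric balls are precisely the cosets $hG_n$, so that normalized Haar measure on $H$ is the natural ``mass distribution'', and then verify the decay estimate $\mu(S)\le(\diam S)^\alpha$ cleanly, including the reduction from an arbitrary small covering set to the coset containing it. The upper bound and the auxiliary topological facts --- closedness of $H$ giving compactness, $\bigcap_nG_n=\{1\}$ with $G$ infinite giving $|G/G_n|\to\infty$, and the passage to a strictly decreasing filtration --- are comparatively routine.
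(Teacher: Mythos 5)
The paper offers no proof of this statement at all --- it is quoted as a theorem of Barnea and Shalev --- so there is nothing internal to compare you against; judged on its own, your argument is essentially the standard (and essentially Barnea--Shalev's) proof and is correct. The upper bound, covering $H$ by the $|H/(H\cap G_n)|$ cosets of $G_n$ meeting it along a subsequence where $\beta_n<\alpha'$ and letting $|G/G_n|\to\infty$, is exactly right; the lower bound via the mass distribution principle for the normalized Haar measure of $H$, using that in this ultrametric the closed balls are precisely the cosets $xG_n$ and $\mu(xG_n\cap H)=|H/(H\cap G_n)|^{-1}$, is the same coset-counting content that Barnea and Shalev package through the lower box dimension; your route is a little more self-contained since it never mentions box dimension.

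Two small repairs are needed, neither affecting the substance. First, in your preliminary claim the choice of $n$ least with $|G/G_n|^{-1}\le\rho$ only gives $\diam C\le\rho$, not $\diam C\le\diam S$; for the inequality you actually use in the lower bound you must take $n$ least with $|G/G_n|^{-1}\le\diam S$, which exists whenever $\diam S>0$, and you should treat covering sets with $\diam S=0$ separately: they are singletons, and since $\beta>0$ gives $|H/(H\cap G_n)|\ge|G/G_n|^{\alpha'}\to\infty$, the group $H$ is infinite, its Haar measure is atomless, and such sets contribute nothing to the mass estimate. Second, the finite-$G$ case is not vacuous: there the right-hand side equals $\log|H|/\log|G|$ while every subset of a finite metric space has Hausdorff dimension $0$, so the theorem tacitly assumes $G$ infinite (equivalently $|G/G_n|\to\infty$), which is the setting Barnea and Shalev work in and the hypothesis your proof actually uses.
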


\begin{remark}
The Hausdorff dimension of $H \leq G$ depends in general on the chosen filtration $\left\{G_n\right\}$. In 
\cite[Example 2.5]{barnea_shalev} the authors give an example.
\end{remark}

When we talk about the Hausdorff dimension of a subgroup $H$ in $G$, we will from now on mean the dimension of its
closure, $\bar{H}$ in $\bar{G}$, which denotes the profinite completion of $G$ (see \cite{ribes_zaleskii} for a
definition).

\medskip

The \emph{Hausdorff spectrum} $\spec_H(G)$ of a group $G$ is the set of all values $\alpha \in [0,1]$ for which
there exists a subgroup $H$ such that $dim_G(H) = \alpha$:

\[\spec_H(G)=\left\{\dim_G(\bar{H}) | \h H \leq G\right\}.\] The \emph{finitely generated Hausdorff spectrum} of a
group $G$ is defined as

\[\spec_H^{fg}(G) = \left\{ \dim_G(\bar(H)) | \h H \leq G, H \mbox{ is finitely generated}\right\}.\]

\section{Rooted Trees and Automorphisms}

For the general concept of groups acting on rooted trees we refer to \cite{bartholdi}. In contrast to examples most
widely studied the rooted trees here can also be irregular in the sense that the valency of vertices on different
levels of the tree does not have to be the same. The tree, however, will still be spherically homogenous. 

\medskip

In this section we will recall some of the notation and definitions from \cite{bartholdi} and \cite{segal_fifg}.

\subsection{Trees}

A \emph{tree} is a connected graph which has no non-trivial cycles. If $T$ has a distinguished \emph{root} vertex $r$
it is called a \emph{rooted tree}. The distance of a vertex $v$ from the root is given by the length of the path from
$r$ to $v$ and called the \emph{norm} of $v$. The number \[d_v = | \{e \in E(T): e=\left(v_1, v_2\right), v = v_1
\textnormal{ or } v=v_2\}|\] is called the \emph{degree} of $v \in V(T)$. The tree is called \emph{spherically
homogeneous} if vertices of the same norm have the same degree. Let $\Omega(n)$ denote the set of vertices of distance
$n$ from the root. This set is called the $n$-th level of $T$. A spherically homogeneous tree $T$ is determined by,
depending on the tree, a finite or infinite sequence $\bar{l}=\left\{l_n\right\}_{n=1}$ where $l_n+1$ is the degree of
the vertices on level $n$ for $n \geq 1$. The root has degree $l_0$. Hence each level $\Omega(n)$ has $\prod_{i=0}^{n-1}
l_i$ vertices. Let us denote this number by $m_n = |\Omega(n)|$. We denote such a tree by $T_{\bar{l}}$. A tree is
called \emph{regular} if $l_i = l_{i+1}$ for all $i \in \mathbb{N}$. Let $T[n]$ denote the finite tree where all
vertices have norm less or equal to $n$ and write $T_v$ for the subtree of $T$ with root $v$.
For all vertices $v,u \in \Omega(n)$ we have that $T_u \simeq T_v$. Denote a tree isomorphic to $T_v$ for $v \in
\Omega(n)$ by $T_n$. This will be the tree with defining sequence $\left(l_n, l_{n+1}, \dots \right)$. To each sequence
$\bar{l}$ we associate a sequence $\left\{X_n\right\}_{n \in \mathbb{N}}$ of alphabets where $X_n=\left\{v_1^{(n)},
\dots, v_{l_n}^{(n)}\right\}$ is an $l_n$-tuple so that $|X_n|=l_n$.  A path beginning at the root of length $n$ in
$T_{\bar{l}}$ is identified with the sequence ${x_1,\dots, x_i, \dots, x_n}$ where $x_i \in X_i$ and infinite paths are
identified in a natural way with infinite sequences. Vertices will be identified with finite strings in the alphabets
$X_i$. Vertices on level $n$ can be written as elements of $Y_n  = X_0 \times \dots \times X_{n-1}$. Alphabets induce
the lexicographic order on the paths of a tree and therefore the vertices.

\subsection{Automorphisms}

An \emph{automorphism} of a rooted tree $T$ is a bijection from $V(T)$ to $V(T)$ that preserves edge incidence and the
distinguished root vertex $r$. The set of all such bijections is denoted by $\Aut T$. This group induces an imprimitive
permutation on $\Omega(n)$ for each $n \geq 2$. Consider an element $g \in \Aut(T)$.  Let $y$ be a letter from $Y_n$,
hence a vertex of $T[n]$ and $z$ a vertex of $T_n$. Then $g(y)$ induces a vertex permutation $g_y$ of $Y_n$. If we
denote
the image of $z$ under $g_y$ by $g_y(z)$ then \[g(yz)= g(y)
g_y(z).\]

\medskip

With any group $G \leq \Aut T$ we associate the subgroups \[\St_G(u)=\left\{g \in G: u^g=u\right\},\] the
\emph{stabilizer} of a vertex $u$. Then the subgroup \[\St_G(n)=\bigcap_{u \in \Omega(n)} \St_G(u)\] is called the
\emph{$n$-th level stabilizer} and it fixes all vertices on the $n$-th level. Another important class of subgroups
associated with $G \leq \Aut T$ consists of the \emph{rigid vertex stabilizers} \[\rst_G(u)=\left\{g \in G: \forall v
\in
V(T)
\setminus V(T_u): v^g=v\right\}.\]
The subgroup \[\rst_G(n)= \rst_G(u_1) \times \dots \times \rst_G(u_{m_n})\] is called the \emph{$n$-th level rigid
stabilizer}. Obviously $\rst_G(n) \leq \St_G(n)$.

\medskip

The \emph{support} of an automorphism $g$ is the set of all vertices that $g$ acts non-trivially on. If the support of
an automorphism $g \in G$ only containes the root, then we call $g$ a \emph{rooted automorphism}. We choose an infinite
path $P=\left(p_n\right)_{n \geq 0}$, starting at the root. Following the definition in \cite{zugadi_fernandez}, if we
consider, for every $n \geq 1$, and immediate descendant $s_n$ of $p_{n-1}$ not lying in $P$, we say that the sequence
$S=\left(s_n\right)_{n \geq 1}$ is a \emph{spine} of the tree $T$. An element $g \in G$ is a \emph{spinal automorphism}
if the support of $g$ is contained in $S$.

\begin{definition}
A \emph{spinal group} $G$ acting on a rooted tree $T$ is a subgroup of $\Aut(T)$ which is generated only by a set $A$
of rooted automorphisms and a set $B$ of spinal automorphisms.
\end{definition}

\section{The Construction of $G$}\label{sec_group}

In this Section we explain the construction of the group $G$ with the desired properties. We will then show in the next
section that $G$ indeed has those properties.

\medskip

Denote by $A_k$ the alternating group acting on the set $\left\{1, \dots, k\right\}$. Every group $A_k$ is generated by
a $3$-cycle and an $k$-cycle (\cite{dixon_mortimer}): 
\[\tau_k = ((k-2)(k-1)k), \h \sigma_k = (1 \dots k).\]

Let $\left\{l_i\right\}_{n \geq 0}$ be a sequence of natural numbers and let $\{A_{l_i}\}_{i \in \mathbb{N}}$ be a
sequence of alternating groups acting on the sets $\left\{1, \dots, l_i\right\}$. We study the group
\[G=\left<\tau_{l_0}, \sigma_{l_0}, \zeta, \psi \right> \h \h\] where $\zeta$ and $\psi$ are
recursively defined on each level $n$ by
\[\zeta_n=\left(\zeta_{n+1},\tau_{l_{n+1}},1,\ldots,1\right)_n,\]\[\psi_n=\left(\psi_{n+1},\sigma_{l_{n+1}},1,\ldots,
1\right)_n.\] This means the action on the first vertex of level $n$ is given by $\zeta_{n+1}$ or $\psi_{n+1}$ and the
action on the second vertex by the rooted automorphism $\tau_{n+1}$ or $\sigma_{n+1}$. Figure \ref{figure_zeta} depicts
the action of the automorphism $\zeta$ and $\psi$ on the tree. The action of $\zeta$ and $\psi$ on all unlabelled
vertices $v$ in the Figure will be given by the identity on $T_u$.

\begin{figure}[ht!]
\centering

\labellist
\pinlabel \LARGE{$\zeta$} at 50 200
\pinlabel $\tau_{l_1}$ at 128 180
\pinlabel $\zeta_1$ at 90 183

\pinlabel $\tau_{l_2}$ at 95 140
\pinlabel $\zeta_2$ at 62 143

\pinlabel $\tau_{l_3}$ at 66 95
\pinlabel $\zeta_3$ at 35 100

\pinlabel $\tau_{l_4}$ at 40 55
\pinlabel $\zeta_4$ at 8 60

\pinlabel $\zeta_0$ at 135 215

\pinlabel \LARGE{$\psi$} at 270 200
\pinlabel $\sigma_{l_1}$ at 350 180
\pinlabel $\psi_1$ at 310 183

\pinlabel $\sigma_{l_2}$ at 320 140
\pinlabel $\psi_2$ at 280 143

\pinlabel $\sigma_{l_3}$ at 288 95
\pinlabel $\psi_3$ at 250 100

\pinlabel $\sigma_{l_4}$ at 258 55
\pinlabel $\psi_4$ at 225 60

\pinlabel $\psi_0$ at 355 215

\endlabellist
\centerline{
\includegraphics[scale=1]{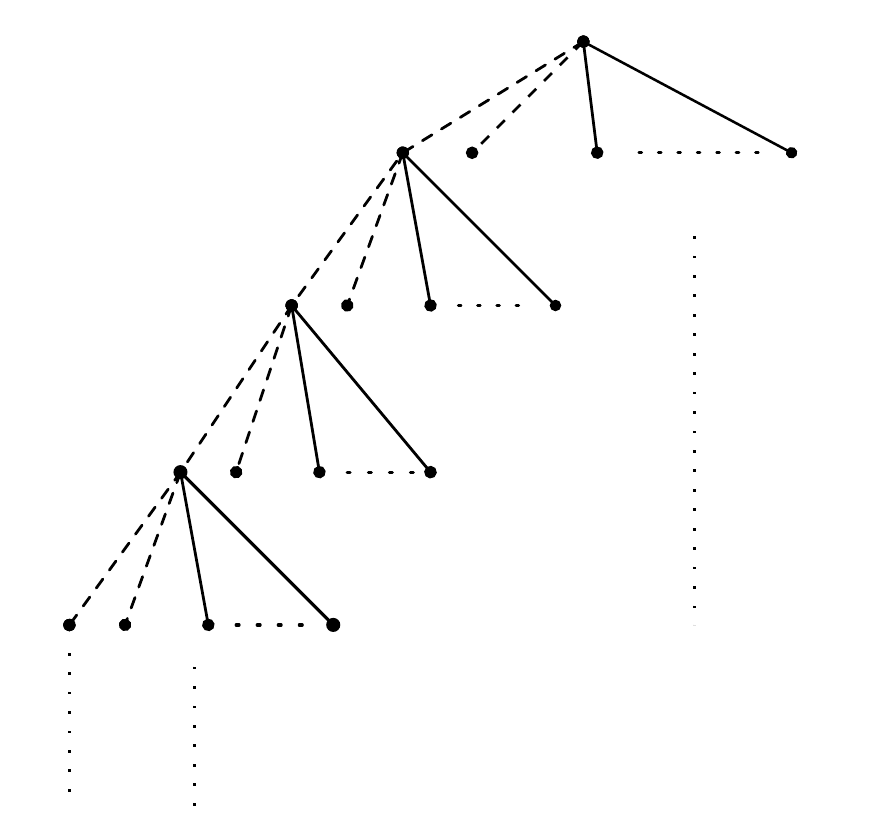}
\includegraphics[scale=0.93]{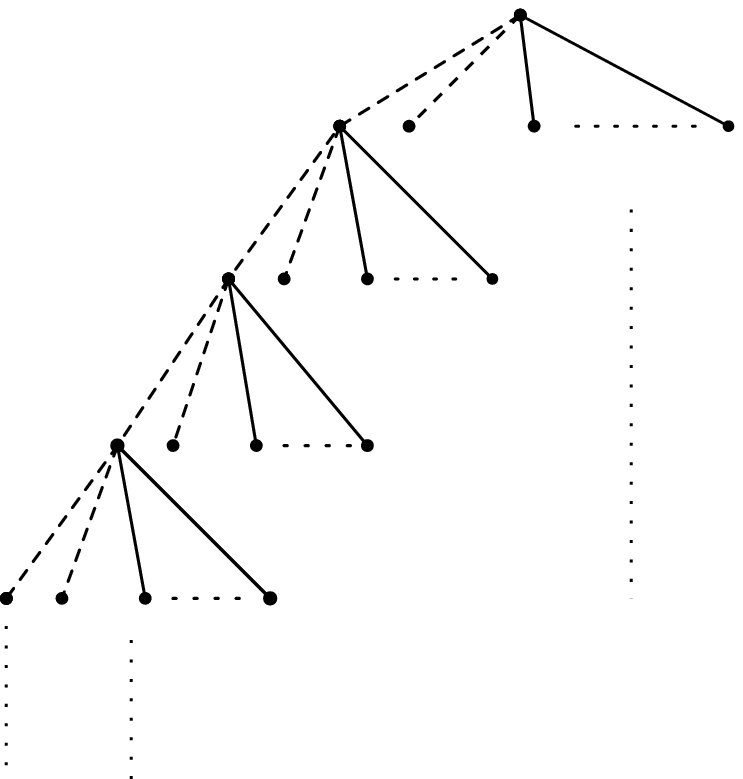} %no idea why the second image is bigger if not scaled.
}
\caption{Portrait of the automorphisms $\zeta$ and $\psi$.}\label{figure_zeta}
\end{figure}

The following Proposition is a simple inductive argument using that the alternating groups are perfect and is proved in
\cite{segal_subgroupGrowth}:

\begin{prop}\label{prop_actionLevel}
$G$ acts as the iterated wreath product $A_{l_{n-1}} \wr \dots \wr A_{l_1} \wr A_{l_0}$ on the set $\Omega(n)$ of $m_n$
vertices of each level $n$.
\end{prop}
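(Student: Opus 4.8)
The plan is to prove Proposition \ref{prop_actionLevel} by induction on $n$, showing that the image of $G$ in $\Sym(\Omega(n))$ is precisely the iterated wreath product $W_n := A_{l_{n-1}} \wr \dots \wr A_{l_1} \wr A_{l_0}$. The base case $n=1$ is immediate: on $\Omega(1)$ the automorphisms $\zeta,\psi$ act trivially (their first-level sections are deeper down, and the labels $\tau_{l_1},\sigma_{l_1}$ sit on level $1$ but permute only within a level-$2$ fibre), while $\tau_{l_0}$ and $\sigma_{l_0}$ generate $A_{l_0}$ by the cited fact from \cite{dixon_mortimer}. So the level-$1$ image is exactly $A_{l_0} = W_1$.

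For the inductive step, assume $G$ surjects onto $W_{n}$ acting on $\Omega(n)$; I want to deduce surjectivity onto $W_{n+1}$ on $\Omega(n+1)$. Write $v_1, \dots, v_{l_0}$ for the level-$1$ vertices and recall $W_{n+1} = W_n' \wr A_{l_0}$, where $W_n' = A_{l_{n}} \wr \dots \wr A_{l_1}$ acts on each subtree $T_{v_i}[n]$ (which has $m_{n+1}/l_0$ vertices on its bottom level). The rooted generators $\tau_{l_0}, \sigma_{l_0}$ already give the top $A_{l_0}$ factor, so it suffices to show that the first-level stabilizer $\St_G(1)$ surjects onto the base group $\bigl(W_n'\bigr)^{l_0}$. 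First I would observe that conjugating the ``deep'' generators $\zeta,\psi$ by the rooted group $A_{l_0}$ moves their active coordinate around all $l_0$ branches, so by a commutator/perfectness argument it is enough to show that the projection of $\St_G(1)$ onto the single coordinate $\Aut(T_{v_1})$ surjects onto $W_n'$ acting on $\Omega(n)$ of that subtree. But $T_{v_1} \cong T_1$, and under this identification the sections at $v_1$ of $\zeta, \psi, \tau_{l_0}^{\,\cdot}, \dots$ generate a group of exactly the same shape as $G$ but with the sequence $(l_1, l_2, \dots)$ in place of $(l_0, l_1, \dots)$: indeed $\zeta|_{v_1} = \zeta_1$ with $(\zeta_1)_n = (\zeta_{2}, \tau_{l_{2}}, 1, \dots, 1)$, and similarly for $\psi$, while the rooted generators $\tau_{l_1}, \sigma_{l_1}$ of $A_{l_1}$ appear as sections at $v_1$ of suitable elements (they are the labels $\tau_{l_1}, \sigma_{l_1}$ sitting on level $1$). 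Hence by the induction hypothesis applied to this shifted group, that section group surjects onto $A_{l_{n}} \wr \dots \wr A_{l_1} = W_n'$ on the level-$n$ vertices of $T_1$, which is what we need.

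The main obstacle I anticipate is the bookkeeping that shows $\St_G(1)$ surjects onto the \emph{full} base group $(W_n')^{l_0}$ rather than onto some diagonal or proper subdirect product; a priori the single generator $\zeta$ (resp.\ $\psi$) only acts nontrivially on the \emph{first} branch, so one must genuinely use the $A_{l_0}$-conjugates $\zeta^g$ for $g$ running over coset representatives, together with the fact that $A_{l_n}\wr\dots\wr A_{l_1}$ is perfect (being an iterated wreath product of perfect groups), to kill off-diagonal obstructions: if $N$ is a normal subgroup of $(W_n')^{l_0}$ projecting onto each factor, then because $W_n'$ is perfect and has no abelian quotients, Goursat/commutator arguments force $N$ to contain each factor outright. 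This perfectness input is exactly the ``alternating groups are perfect'' remark flagged before the proposition. Once that is in place the surjection $G \twoheadrightarrow (W_n')^{l_0} \rtimes A_{l_0} = W_{n+1}$ follows, completing the induction; since every $g\in\Aut(T)$ acting on $\Omega(n+1)$ lies in $W_{n+1}$ by definition, equality (not just surjection into $\Sym(\Omega(n+1))$) holds, and the proposition is proved.
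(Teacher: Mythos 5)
The paper itself does not prove this proposition; it is quoted from \cite{segal_subgroupGrowth} with the hint that it is ``a simple inductive argument using that the alternating groups are perfect'', and your induction is exactly the intended argument. Your base case, the observation that the sections at $v_1$ of $\zeta,\psi$ and of suitable rooted conjugates give the shifted group $\left<\tau_{l_1},\sigma_{l_1},\zeta_1,\psi_1\right>$ (so that, by the induction hypothesis over all defining sequences, the image $K$ of $\St_G(1)$ in the base group projects onto $W_n'=A_{l_n}\wr\dots\wr A_{l_1}$ in each coordinate), and the reduction to showing $K=(W_n')^{l_0}$ are all sound.

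The gap is in how you kill the ``off-diagonal obstructions''. You invoke the lemma that a \emph{normal} subgroup of $(W_n')^{l_0}$ projecting onto every factor must be the whole product. The lemma is true, but its hypothesis is not available: $K$ is normal only in the image $Q$ of $G$ on level $n+1$, and $Q$ is precisely what you are trying to identify, so you cannot conjugate $K$ by arbitrary elements of the base group. Without that normality, subdirectness plus perfectness proves nothing: the diagonal of $(W_n')^{l_0}$ is subdirect, is normalized by the coordinate-permuting rooted copy of $A_{l_0}$, and $W_n'$ is perfect, yet it contains no factor. The missing ingredient is the restricted support of the spinal generators. For instance, the image of $\zeta$ has sections $\zeta_1,\tau_{l_1}$ in coordinates $1,2$ and is trivial elsewhere, while a rooted conjugate of $\psi$ has sections $\psi_1,\sigma_{l_1}$ in coordinates $3,2$; their commutator lies in $K$, is supported in coordinate $2$ only, and equals $[\tau_{l_1},\sigma_{l_1}]\neq 1$ there. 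Then $L=\left\{w\in W_n' : (w,1,\dots,1)\in K\right\}$ is normal in $\pi_1(K)=W_n'$ (conjugate by elements of $K$, which preserve coordinates), and after a rooted conjugation it contains an element with nontrivial $A_{l_1}$-component; the normal closure of such an element in $W_n'$ is all of $W_n'$, by simplicity of $A_{l_1}$ and the same commutator trick one level down using perfectness (this is where $l_i\geq 5$ is genuinely needed --- the statement fails for abelian $A_3$ or non-perfect $A_4$). Hence $K\supseteq W_n'\times 1\times\dots\times 1$, and transitivity of the rooted $A_{l_0}$ on the coordinates gives $K=(W_n')^{l_0}$, which completes your induction; the rest of your argument then goes through.
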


\section{The Finitely Generated Spectrum}

In this section we show that for every $\alpha \in [0,1]$ we can construct a group $G_\alpha$ as in Section
\ref{sec_group} such that there exists a finitely generated subgroup $H \leq G_\alpha$ with $\dim_G(H)=\alpha$. We fix
for the rest of this paper the filtration $G_i = \St_G(i)$ of $G$.

\medskip

First a simple Lemma on the approximation of a number in the interval $[0,1]$.

\begin{lemma}\label{lemma_alpha}
For every $\alpha \in (0,1)$ there exists a sequence $\left\{l_i\right\}, l_i \in \mathbb{N},$ of integers $l_i \geq 5$
such that \[\lim_{i \rightarrow \infty} \prod_{j=0}^i \frac{l_i-2}{l_i} = \alpha.\]
\end{lemma}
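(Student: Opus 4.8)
The plan is to construct the sequence greedily so that the partial products $P_i := \prod_{j=0}^{i}\left(1-\frac{2}{l_j}\right)$ form a strictly decreasing sequence that converges to $\alpha$ from above. Observe first that the constraint $l_i \geq 5$ forces each factor into the range $\tfrac{3}{5} \leq 1 - \tfrac{2}{l_i} < 1$, so strict monotonicity of $(P_i)$ is automatic, and the whole issue is to aim the limit at $\alpha$.

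First I would set up the recursion. Put $P_{-1} = 1$, which exceeds $\alpha$ since $\alpha < 1$; then, assuming $l_0,\dots,l_{i-1}$ have been chosen with $P_{i-1} > \alpha$, let $l_i$ be the least integer $l \geq 5$ with $P_{i-1}\left(1 - \tfrac{2}{l}\right) > \alpha$. Rewriting the inequality as $l > \tfrac{2P_{i-1}}{P_{i-1}-\alpha}$ shows that the right-hand side is a finite positive real (because $P_{i-1} > \alpha$), so a valid $l_i$ exists; and by the very choice of $l_i$ we get $P_i = P_{i-1}\left(1-\tfrac{2}{l_i}\right) > \alpha$, so the induction continues and $P_i > \alpha$ for every $i$.

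Next I would show $P_i \to \alpha$. Being decreasing and bounded below by $\alpha$, the sequence $(P_i)$ converges to some $L \geq \alpha$; it remains to exclude $L > \alpha$. If $L > \alpha$, then $P_{i-1} - \alpha \geq L - \alpha > 0$ and $P_{i-1} \leq 1$ for all $i$, so the defining inequality gives $l_i \leq \max\!\left\{5,\, 1 + \tfrac{2}{L-\alpha}\right\} =: M$, a bound independent of $i$. Then the $l_i$ take only finitely many values, the factors satisfy $1 - \tfrac{2}{l_i} \leq 1 - \tfrac{2}{M} =: c < 1$, and hence $P_i \leq c^{\,i+1} \to 0$, contradicting $P_i \geq L > \alpha > 0$. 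Therefore $L = \alpha$, which is the claim.

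I do not expect a serious obstacle here: the construction is essentially forced, and the two points needing a moment's care are that a valid $l_i \geq 5$ always exists and that the inequality $P_i > \alpha$ stays strict (so that the next step remains well-defined), both of which follow directly from $1 - \tfrac{2}{l} \to 1$ as $l \to \infty$ together with the defining property of $l_i$.
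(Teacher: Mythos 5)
Your proof is correct, and while it shares the paper's basic strategy --- choose the $l_i$ recursively so that the partial products $P_i=\prod_{j=0}^{i}\left(1-\tfrac{2}{l_j}\right)$ stay strictly above $\alpha$ and decrease --- it certifies convergence by a genuinely different mechanism. The paper prescribes at each step an explicit window $\tfrac{\alpha}{P_{i-1}}<1-\tfrac{2}{l_i}<\tfrac17(\cdots)$ whose upper endpoint forces $P_i-\alpha$ to shrink by a fixed geometric factor, so convergence comes with a rate; the price is that one must verify the window is nonempty and wide enough to contain a value of the form $1-\tfrac{2}{l}$ with an integer $l\geq 5$, a point the paper passes over with ``without loss of generality $l_i\geq 5$'' (and note that the printed upper bound $\tfrac17\left(\tfrac{6\alpha}{P_{i-1}}+\alpha\right)$ lies below the lower bound $\tfrac{\alpha}{P_{i-1}}$ whenever $P_{i-1}<1$, so it has to be read as, e.g., $\tfrac17\left(6+\tfrac{\alpha}{P_{i-1}}\right)$, consistent with the $i=0$ condition). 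You instead take the least admissible $l_i$ and rule out a limit $L>\alpha$ by contradiction: the $l_i$ would then be bounded by $\max\left\{5,\,1+\tfrac{2}{L-\alpha}\right\}$, every factor would be at most some $c<1$, and $P_i\leq c^{\,i+1}\to 0$ would contradict $P_i>\alpha$. This is softer --- it yields no rate of convergence, which the paper does not need anyway --- but it is more robust: existence of each $l_i$ is immediate from $1-\tfrac{2}{l}\to 1$, and no arithmetic about the placement of the values $1-\tfrac{2}{l}$ inside a prescribed interval is required. Both delicate points (existence of $l_i\geq 5$ at every step and strictness of $P_i>\alpha$) are handled cleanly in your argument, so there is no gap.
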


\begin{proof}
Choose $l_0$ such that 
\[\frac17 (6+\alpha) > \frac{l_0-2}{l_0} > \alpha.\] Further choose $l_i$ for $i \geq 1$ such that
\[\frac17\left(\frac{6 \cdot \alpha}{\prod_{j=0}^{i-1} \frac{l_j-2}{l_j}} + \alpha\right) >  \frac{l_i-2}{l_i} >
\frac{\alpha}{\prod_{j=0}^{i-1} \frac{l_j-2}{l_j}}.\] We can without loss of generality assume that $l_i \geq 5$.
\end{proof}

\begin{remark}\label{remark_primes}
The approximating sequence $\left\{l_i\right\}_{i \geq 0}$ is not unique. It will prove useful to choose each $l_i$
such, that it has many different prime factors.
\end{remark}

We can now show that we can construct a finitely generated subgroup $H$ of dimension $\alpha$ in $G$ if we choose the
defining sequence $\left\{l_i\right\}$ for $G$ depending on $\alpha$ in the right way.

\begin{theorem}\label{thm_alpha}
For every $\alpha \in [0,1]$ there exists a branch group $G_\alpha$ and a finitely generated subgroup $H \leq G_\alpha$
such that $\dim_{G_\alpha}(H)=\alpha$. Further, $H$ is again a finitely generated branch group.
\end{theorem}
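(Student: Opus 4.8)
The plan is to fix $\alpha \in (0,1)$ (the cases $\alpha = 0$ and $\alpha = 1$ being handled separately by degenerate or full constructions), choose the defining sequence $\{l_i\}$ for $G_\alpha$ according to Lemma~\ref{lemma_alpha}, and then exhibit an explicit finitely generated subgroup $H \le G_\alpha$ whose closure has the prescribed dimension. The natural candidate for $H$ is a subgroup generated by a bounded number of the available generators together with a few of their section-conjugates; the obvious first guess is $H = \langle \tau_{l_0}, \sigma_{l_0}, \zeta, \psi \rangle$ intersected with an appropriate ``diagonal'' or, more likely, the subgroup generated by just the spinal part $\langle \zeta, \psi \rangle$ (possibly enlarged by one rooted element to retain branchness). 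First I would compute, level by level, the index $|H/(H \cap \St_G(n))|$. By Proposition~\ref{prop_actionLevel} we know $|G/\St_G(n)| = \prod_{j=0}^{n-1} |A_{l_j}| = \prod_{j=0}^{n-1} \tfrac{l_j!}{2}$, so by the Barnea--Shalev formula (the theorem quoted above) it suffices to show that $|H/(H\cap \St_G(n))|$ grows like the appropriate power, i.e.\ that
\[
\liminf_{n\to\infty} \frac{\log |H/(H\cap \St_G(n))|}{\log |G/\St_G(n)|} = \alpha.
\]

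The key technical step is therefore the inductive computation of the image of $H$ in the iterated wreath product $A_{l_{n-1}} \wr \dots \wr A_{l_0}$. Because $\zeta_n = (\zeta_{n+1}, \tau_{l_{n+1}}, 1, \dots, 1)_n$ and $\psi_n = (\psi_{n+1}, \sigma_{l_{n+1}}, 1, \dots, 1)_n$, the section of $H$ at the first vertex of level $n$ again contains copies of $\zeta_{n+1}, \psi_{n+1}$, while the section at the second vertex contains $\tau_{l_{n+1}}, \sigma_{l_{n+1}}$, which generate all of $A_{l_{n+1}}$. Thus the ``new'' contribution to the index at level $n+1$ coming through the second branch is a full factor of $|A_{l_{n+1}}|$, whereas the first branch only propagates $H$ itself recursively and the remaining $l_n - 2$ branches are trivial. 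Unwinding this recursion, one finds (using that the $A_{l_j}$ are perfect, so commutators do not shrink the image) that
\[
\log|H/(H\cap \St_G(n))| \approx \sum_{j=1}^{n} \Bigl(\prod_{k<j} \tfrac{l_k - 2}{l_k}\Bigr)\,\log |A_{l_j}|,
\]
roughly, the fraction of vertices on level $j$ that lie on the ``non-spine'' immediate descendants. Dividing by $\log|G/\St_G(n)| \approx \sum_{j} \log|A_{l_j}| \cdot (\text{appropriate weights})$ and invoking Lemma~\ref{lemma_alpha} together with the careful two-sided bounds chosen there (the factor-of-$\tfrac17$ slack is exactly what forces the ratio of consecutive terms to be controlled, so the $\liminf$ is a genuine limit and equals $\prod \tfrac{l_j-2}{l_j} \to \alpha$), gives the claimed value $\alpha$.

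The main obstacle I expect is the bookkeeping in the level-$n$ index computation: one must show that taking sections and intersecting with level stabilizers does not lose any factors—i.e.\ that the abstractly-defined $H$ really does surject onto the predicted subgroup of the iterated wreath product at every level, with no collapse from relations among $\zeta, \psi$ and their sections. Perfectness of the alternating groups (the same fact underlying Proposition~\ref{prop_actionLevel}) should handle the ``no collapse'' direction, and a dimension/counting argument handles the upper bound. Finally, to see that $H$ is itself a finitely generated branch group: it is finitely generated by construction, it acts on the rooted subtree hanging off the spine (a spherically homogeneous tree in its own right), and branchness follows because the rigid vertex stabilizers of $H$ contain, at each level, full copies of the section alternating groups, which have finite index in $H$ by the index computation just performed. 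I would close by noting that $\alpha = 0$ is obtained by taking $H$ cyclic (generated by a single rooted automorphism, whose closure is procyclic of dimension $0$), and $\alpha = 1$ by taking $H = G_\alpha$ itself with any sequence $\{l_i\}$, since $\dim_{G}(G) = 1$.
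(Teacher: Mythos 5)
Your overall framing (choose $\{l_i\}$ via Lemma \ref{lemma_alpha}, compute $|H/(H\cap\St_G(n))|$ level by level, apply the Barnea--Shalev formula) is the right shell, but the candidate subgroup and the index estimate at its core do not work. If $H=\langle\zeta,\psi\rangle$, then modulo $\St_G(n)$ its image is contained in (a subdirect product inside) $H_1\times A_{l_1}$, where $H_1=\langle\zeta_1,\psi_1\rangle$ is the analogous subgroup one level down and the $A_{l_1}$ factor is a \emph{rooted} copy at the second vertex, contributing only $|A_{l_1}|$ no matter how deep you go. Unwinding the recursion gives $\log|H/(H\cap\St_G(n))|\leq\sum_{j=1}^{n-1}\log|A_{l_j}|$, with no exponentially growing multiplicities, whereas $\log|G/\St_G(n)|=\sum_{j=0}^{n-1}\bigl(\prod_{k<j}l_k\bigr)\log|A_{l_j}|$. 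The ratio therefore tends to $0$, not $\alpha$: a spinal-only subgroup is far too thin. Your own heuristic formula already betrays this -- the weights $\prod_{k<j}\tfrac{l_k-2}{l_k}$ are bounded between $\alpha$ and $1$, so dividing by the denominator's weights $\prod_{k<j}l_k$ forces the $\liminf$ to vanish. Adding a single rooted generator does not fix it either: conjugating the spinal elements around the top level tends to recreate (essentially) the full action below level $1$ and pushes the dimension to $1$, not to $\alpha$. To get dimension exactly $\alpha$ the subgroup must branch over a positive proportion of vertices at \emph{every} level.

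The missing idea, which is the heart of the paper's proof, is to shrink the alternating group at every level rather than to drop generators at the root: inside each $\Alt(l_i)$ the elements $\kappa_{l_i}=\sigma_{l_i}^{-2}\tau_{l_i}\sigma_{l_i}^{2}$ and $\rho_{l_i}=\tau_{l_i}^{2}\sigma_{l_i}$ generate a copy of $\Alt(l_i-2)$, and one defines new spinal generators $\xi,\theta$ whose level-$n$ sections are $\kappa_{l_{n+1}},\rho_{l_{n+1}}$, setting $H=\langle\kappa_{l_0},\rho_{l_0},\xi,\theta\rangle$ (these are words in the original generators, so $H\leq G$). Then $H/\St_H(n)=A_{l_{n-1}-2}\wr\dots\wr A_{l_0-2}$ while $G/\St_G(n)=A_{l_{n-1}}\wr\dots\wr A_{l_0}$, so the numerator carries the multiplicities $\prod_{k<j}(l_k-2)$, and a Stirling-type estimate (using that $l_i\to\infty$ when $\alpha>0$, so $\log((l_i-2)!)/\log(l_i!)\to1$) shows the log-ratio converges to $\lim_i\prod_{j\le i}\tfrac{l_j-2}{l_j}=\alpha$. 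This construction also makes the final claim immediate: $H$ is itself a spinal (hence branch) group on the spherically homogeneous tree with defining sequence $\{l_n-2\}$, something your candidate would not give you.
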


\begin{proof}
If $\alpha=1$ set $H=G$ and if $\alpha=0$ set $H=1$. Otherwise let us choose the sequence $\left\{l_i\right\}$ as in
Lemma \ref{lemma_alpha} such that \[\lim_{i \rightarrow \infty} \prod_{j=0}^i \frac{l_i-2}{l_i} = \alpha.\] Let
$G=\left<\tau_0, \psi_0, \zeta, \psi\right>$ be as described in Section \ref{sec_group}. The elements
$\kappa_n, \rho_n \in \Alt(n)$ with \[\kappa_n = \sigma_n^{-2} \tau_n \sigma_n^2 =((n-4)(n-3)(n-2)) \quad \mbox{and}
\quad \rho_n = \tau^2 \sigma = (1\dots (n-2))\]
generate the subgroup $\Alt(n-2) \leq \Alt(n)$. We use this to construct subgroups acting on $l_i-2$ points of order
$\left(l_i-2\right)!$ on each level. This gives us $\Alt\left(l_i-2)\right) \leq \Alt\left(l_i\right)$ and we prove that
the closure of the spinal subgroup

\[H = \left<\kappa_0,\rho_0, \xi, \theta \right>\] with $\xi_n = \left(\xi_{n+1}, \kappa_{l_{n+1}}, 1, \dots, 1\right)$
and
$\theta_n = \left(\theta_{n+1}, \rho_{l_{n+1}}, 1, \dots, 1\right)$ has dimension \[\alpha = \dim_G(H) =
\lim_{i\rightarrow
\infty}
\prod_{k=0}^i \frac{l_k-2}{l_k}\] in $\bar{G} = \lim_{\infty \leftarrow i} \Alt\left(l_i\right) \wr \dots \wr
\Alt\left(l_0\right)$. The subgroup $H$ is obviously finitely generated. We saw above that $\kappa_{l_0} =
\tau_{l_0}^{\sigma_{l_0}^2}$ and $\rho_{l_0} = \tau_{l_0}^2 \sigma_{l_0}$. It follows that $\xi = \psi^{-2} \zeta
\psi^2$ and
$\xi = \zeta^2 \psi$. We obtain from Proposition \ref{prop_actionLevel} that \[G/\St_G(n) = A_{l_{n-1}} \wr \dots \wr
A_{l_0}.\] Further it is easy to see that then \[H/\left(\St_G(n) \cap H\right) = H/\St_H(n) =
A_{\left(l_{n-1}-2\right)} \wr \dots \wr A_{\left(l_0-2\right)}.\] The formula for the dimension $\dim_G(H)$ of the
closure of $H$ in $\bar{G}$ is hence given by

\begin{equation}\label{eq_limitL}
L = \lim_{i \rightarrow \infty} \frac{\log\left(\left(\frac{\left(l_i-2\right)!}{2}\right)^{\prod_{j=0}^{i-1} l_j-2}
\cdot \dots
\cdot \left(\frac{\left(l_0-2\right)!}{2}\right)\right)}{\log\left(\left(\frac{l_i!}{2}\right)^{\prod_{j=0}^{i-1} l_j}
\cdot \dots \cdot \frac{l_0!}{2}\right).}
\end{equation}

We separate this into

\[\frac{-\log\left(2^{\sum_{j=0}^{i-1} \prod_{k=0}^{j} \left(l_k-2\right)}\right) + \log \left(
\left(l_i-2\right)!^{\prod_{k=0}^{i-1} \left(l_k-2\right)} \cdot \dots \cdot
\left(l_0-2\right)!\right)}{-\log\left(2^{\sum_{j=0}^{i-1} \prod_{k=0}^j l_j} \right) +
\log\left(l_i!^{\prod_{j=0}^{i-1}l_j} \cdot \dots \cdot l_0!\right)}.\]

Let us denote this fraction to be of the form $\frac{-\log A + \log B}{-\log C + \log D}$. This can be computed
separately as

\[-\frac{\log A}{-\log C + \log D} + \frac{\log B}{-\log C + \log D} = - \frac{1}{-\frac{\log A}{\log C} + \frac{\log
D}{\log A}} + \frac{1}{-\frac{\log B}{\log C} + \frac{\log D}{\log B}}.\]

Simple estimations yield that 

\begin{enumerate}
 \item $\lim \frac{-\log A}{\log C} = 0$,
 \item $\lim \frac{\log D}{\log A} = \infty$ and
 \item $\lim \frac{-\log B}{\log C} = 0$.
\end{enumerate}

Hence we concentrate on $\lim \left(\frac{\log D}{\log B}\right)^{-1}$ by computing $\lim \frac{\log B}{\log D}$ which
can be written as

\[L=\lim_{i\rightarrow \infty} \frac{\prod_{j=0}^{i-1} \left(l_j-2\right)
\log\left(\left(l_i-2\right)! \cdot \dots \cdot \left(l_0-2\right)!^{\prod_{j=0}^{i-1}
\frac{1}{l_j-2}}\right)}{\prod_{j=0}^{i-1} l_j \log\left(l_i! \cdot \dots \cdot l_0!^{\prod_{j=0}^{i-1} \frac{1}{l_j
}}\right)}.\]

If we set $\alpha_i = \prod_{j=0}^{i-1} \frac{l_j-2}{l_j}$ then this can be estimated with
\begin{equation}\label{eq_stirling}
e\left(\frac{n}{e}\right)^{n} \leq n! \leq e\cdot \left(\frac{n+1}{e}\right)^{n+1},\end{equation} a
consequence of Stirling's formula, as

\[L \leq \lim_{i\rightarrow \infty} \alpha_i \cdot \frac{\log\left(e^{i+1} \cdot \left(\frac{l_{i}}{e}\right)^{l_i}
\cdot
\dots \cdot
\left(\frac{l_0}{e}\right)^{\prod_{j=1}^{i-1} \frac{1}{l_j}}\right)}{\log \left(l_i! \cdot
\left(l_{i-1}!\right)^{\frac{1}{l_{i-1}}} \cdot \dots \cdot \left(l_0!\right)^{\prod_{j=0}^{i-1}
\frac{1}{l_j}} \right)}\]\[\leq 
\lim_{i\rightarrow \infty} \alpha_i \cdot \frac{\log\left(e^{i+1} \cdot \left(\frac{l_{i}}{e}\right)^{l_i} \cdot \dots
\cdot
\left(\frac{l_0}{e}\right)^{\prod_{j=1}^{i-1} \frac{1}{l_j}}\right)}{\log \left(e^{i+1} \cdot
\left(\frac{l_i}{e}\right)^{l_i} \cdot \dots \cdot \left(\frac{l_0}{e}\right)^{\prod_{j=1}^{i-1}
\frac{1}{l_j}} \right)}=\alpha.\]

For the other inequality we see from \eqref{eq_limitL} that

\[L \geq \alpha_i \cdot \frac{\log\left(\left(l_i-2\right)! \cdot \left(l_{i-1}-2\right)^{\frac{1}{l_i-2}} \cdot \dots
\cdot \left(l_0-2\right)!^{\prod_{j=0}^{i-1} \frac{1}{l_j}}\right) }{\log \left(l_i! \cdot l_{i-1}!^{\frac{1}{l_{i-1}}}
\cdot \dots \cdot l_0!^{\prod_{j=0}^{i-1} \frac{1}{l_j}}\right)}.\] We split up $k! = k \cdot \left(k-1\right)
\cdot \left(k-2\right)!$ for all terms in the denominator and write the logarithm as a sum:

\[\log \left(l_i! \cdot l_{i-1}!^{\frac{1}{l_{i-1}}} \cdot \dots \cdot l_0!^{\prod_{j=0}^{i-1} \frac{1}{l_j}}\right)=
\log \left(l_i \cdot l_{i-1}^{\frac{1}{l_{i-1}}} \cdot \dots \cdot l_0^{\prod_{j=0}^{i-1} \frac{1}{l_j}} \right)\]
\[+\log \left(\left(l_i-1\right) \cdot \left(l_{i-1}-1\right)^{\frac{1}{l_{i-1}}} \cdot \dots \cdot
\left(l_0-1\right)^{\prod_{j=0}^{i-1} \frac{1}{l_j}}\right)\]
\[+\log\left(\left(l_i-2\right)! \cdot \left(l_{i-1}-2\right)!^{\frac{1}{l_i-2}} \cdot \dots
\cdot \left(l_0-2\right)!^{\prod_{j=0}^{i-1} \frac{1}{l_j}}\right).\]

We divide all summands in the denominator by the nominator and get

\[L \geq \alpha_i \cdot \frac{1}{1 + T_1 + T_2 }\] where 

\begin{equation*}\label{eq_T1}
T_1 = \frac{\log \left(l_i \cdot
l_{i-1}^{\frac{1}{l_{i-1}}}\cdot \dots \cdot l_0^{\prod_{j=0}^{i-1} \frac{1}{l_j}}
\right)}
{\log\left(\left(l_i-2\right)! \cdot \left(l_{i-1}-2\right)!^{\frac{1}{l_i-2}} \cdot \dots
\cdot \left(l_0-2\right)!^{\prod_{j=0}^{i-1} \frac{1}{l_j}}\right)}
\end{equation*} and 
\begin{equation*}\label{eq_T2}
T_2 = \frac{\log \left(\left(l_i-1\right) \cdot
\left(l_{i-1}-1\right)^{\frac{1}{l_{i-1}}}\cdot \dots \cdot
\left(l_0-1\right)^{\prod_{j=0}^{i-1} \frac{1}{l_j}}\right)}
{\log\left(\left(l_i-2\right)! \cdot \left(l_{i-1}-2\right)!^{\frac{1}{l_i-2}} \cdot \dots
\cdot \left(l_0-2\right)!^{\prod_{j=0}^{i-1} \frac{1}{l_j}}\right)}.
\end{equation*}
Assuming $l_i \geq 5$ as stated in Lemma \ref{lemma_alpha} for all $i \geq 0$ we can estimate 

\begin{equation*}\label{eq_sum}
\sum_{j=0}^{n} \prod_{k=0}^j \frac{1}{l_k} \leq \sum_{k=1}^n \frac{1}{2^k} = 1
\end{equation*}

with which we obtain the inequality 
\begin{equation}\label{eq_l_i2}
\left(l_i \cdot l_{i-1}^{\frac{1}{l_{i-1}}}\cdot \dots \cdot
l_0^{\prod_{j=0}^{i-1} \frac{1}{l_j}} \right) \leq \left(l_i \cdot l_i^{\frac{1}{l_{i-1}}}\cdot \dots \cdot
l_i^{\prod_{j=0}^{i-1} \frac{1}{l_j}} \right) \leq l_i \cdot \left( l_i^{\frac{1}{l_{i-1}}}\cdot \dots \cdot
l_i^{\prod_{j=0}^{i-1} \frac{1}{l_j}} \right) \leq l_i^2.
\end{equation}

It is easy to see that $T_2 \leq T_1$ and so $T_1 + T_2 \leq 2 T_1$. We use the estimate \eqref{eq_l_i2} in the
nominator of $T_1$ and further \[\log\left(\left(l_i-2\right)! \cdot \left(l_{i-1}-2\right)!^{\frac{1}{l_i-2}} \cdot
\dots
\cdot \left(l_0-2\right)!^{\prod_{j=0}^{i-1} \frac{1}{l_j}}\right) \leq \log\left(\left(l_i-2\right)!\right).\]

The assumption $l_i \geq 5$ further gives that $l_i-2 \geq \frac{l_i}{2}$. Using \eqref{eq_stirling} again this
gives us 
\[T_1 \leq \frac{2 \log l_i}{\log \left(l_i -2\right)!} \leq \frac{2
\log l_i}{\log\left(e\left(\frac{l_i-2}{e}\right)^{l_i-2} \right)}\leq 2 \cdot \frac{\log l_i}{\left(l_i-2\right)
\log\left(l_i-2\right)} \leq \frac{8}{l_i} \longrightarrow \infty.\] Hence $L \geq \alpha$, and so $L = \alpha$. For the
last part, we observe that $H$ is a finitely generated branch group acting on the tree with defining sequence
$\left\{l_n-2\right\}_{n \geq 0}$.
\end{proof}

The proof of the Theorem \ref{thm_alpha} determines a sequence $\left\{l_i\right\}_{i \geq 0}$. We fix this sequence
for the rest of this document. 

\medskip

Following \cite{ribes_zaleskii}, we say a group $\Gamma$ is \emph{strongly complete} if
it satisfies any of the following conditions, which are easily seen to be equivalent:

\begin{enumerate}[(a)]
 \item Every subgroup of finite index in $\Gamma$ is open,
 \item $\Gamma$ is equal to its own profinite completion, 
 \item Every group homomorphism from $\Gamma$ to any profinite group is continuous.
\end{enumerate}

A powerful Theorem by Nikolov and Segal \cite[Theorem 1.1]{dan_nik} states

\begin{theorem}\label{thm_DanNik}
%Let $G$ be a finitely generated profinite group. If $H$ is a subgroup of finite index in $G$, then $H$ is open.
Every finitely generated profinite group is strongly complete.
\end{theorem}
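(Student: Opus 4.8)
The plan is to prove Theorem~\ref{thm_DanNik} --- that every finitely generated profinite group $\Gamma$ is strongly complete --- by reducing it to a statement about finite groups, namely that in a $d$-generated finite group every verbal subgroup corresponding to a word $w$ is a bounded product of $w$-values, with the bound depending only on $d$ and $w$. First I would recall that since $\Gamma$ is finitely generated and profinite, it is (topologically) $d$-generated for some $d$, and every open subgroup has finite index; hence it suffices to show the converse, that every finite-index subgroup is open. Since a subgroup of finite index contains a normal subgroup of finite index, and since $\Gamma/N$ is finite for $N$ open, the crux is to show every finite-index \emph{normal} subgroup $M \trianglelefteq \Gamma$ is open. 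The standard device here is to intersect $M$ with a suitable characteristic open subgroup and to exploit that $\Gamma$, being $d$-generated, has only finitely many open subgroups of each index, so $\Gamma$ has a smallest open subgroup $U_n$ contained in all open subgroups of index $\le n$; one then argues $M \supseteq U_n$ for $n = [\Gamma : M]$.

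The key reduction is as follows. Replacing $\Gamma$ by $\Gamma / \mathrm{core}(M)$ if necessary (here one must be slightly careful since $\mathrm{core}(M)$ need not a priori be open --- this is exactly the subtlety), one reduces to the case where $\Gamma$ is a $d$-generated profinite group with a finite-index normal subgroup $M$ such that $\Gamma/M$ is a finite group, and one wants $M$ open. Writing $Q = \Gamma/M$ and letting $w$ range over a finite set of words that ``witness'' the structure of $Q$ (for instance, if $Q$ is perfect one uses the commutator word; in general one uses the fact that any finite group is generated as a normal subgroup by boundedly many values of a suitable word, or one induces on the derived/lower-central structure), the heart of the matter is the Nikolov--Segal theorem on finite groups: there is a function $f(d,w)$ such that in any $d$-generated finite group $F$, the verbal subgroup $w(F)$ equals the set of products of at most $f(d,w)$ values $w(g_1,\dots,g_k)^{\pm 1}$. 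I would cite this as the substantive input (it is the deep content of \cite{dan_nik}), and then observe that its uniform-boundedness conclusion passes to the inverse limit: the set of $w$-values in $\Gamma$ is closed (being a continuous image of a compact space), and a bounded product of closed sets is closed, so $w(\Gamma)$ --- the abstract verbal subgroup --- is already closed, hence open whenever it has finite index.

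With that in hand I would finish by an induction showing $M$ contains a product of finitely many verbal subgroups of $\Gamma$ corresponding to words determined by $Q = \Gamma/M$: one uses that $Q$ is finite, so its derived series (or a chief series) has bounded length, and at each step the relevant section is handled by the appropriate word --- commutators for the perfect part, power words $x^{n}$ for the abelian sections --- each of which, by the finite Nikolov--Segal bound applied uniformly across all finite quotients, gives a \emph{closed} verbal subgroup in $\Gamma$. Therefore $M$ contains an open subgroup and is itself open, giving condition (a); equivalences (a) $\Leftrightarrow$ (b) $\Leftrightarrow$ (c) are formal (a homomorphism to a profinite group has, for each open subgroup of the target, a finite-index --- hence open --- preimage, so is continuous; and a group equal to its profinite completion has all finite-index subgroups open).

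The main obstacle is unquestionably the finite-group input: proving that verbal subgroups of bounded-rank finite groups are boundedly-wide products of word-values. This is a genuinely hard combinatorial and structural theorem --- it requires the classification of finite simple groups together with delicate estimates on covering numbers and on commutator width in quasisimple groups and their extensions --- and I would not attempt to reproduce it, but rather quote \cite[Theorem 1.1]{dan_nik} (and its supporting propositions) as a black box. The only other point requiring care on the profinite side is the circularity worry around $\mathrm{core}(M)$: one must arrange the argument so that the openness of the relevant verbal subgroups is established \emph{before} one passes to any quotient whose profiniteness one has not yet verified, which is why the argument is phrased in terms of verbal subgroups of $\Gamma$ itself rather than of a putative finite quotient.
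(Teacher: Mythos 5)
The paper does not prove this statement at all: it is imported verbatim from Nikolov and Segal, cited as \cite[Theorem 1.1]{dan_nik}, so there is no internal argument to compare yours against, and simply quoting the result (as the paper does) is the intended treatment. Your sketch is broadly the right shape of the actual Nikolov--Segal argument: reduce to finite-index \emph{normal} subgroups, show that a suitable abstract verbal subgroup $w(\Gamma)$ is closed because the uniform width bound in $d$-generated finite quotients makes it a bounded product of compact sets of $w$-values, note that a closed subgroup of finite index in a compact group is open, and observe that the equivalences (a)--(c) are formal. Two cautions, though. First, your citation for the black box is circular as stated: Theorem 1.1 of \cite{dan_nik} \emph{is} the strong completeness statement itself; what you actually need to quote is the uniform bounded-width theorem for the relevant words (commutator-type words and $q$-th powers) in $d$-generator finite groups, which is the substantive content of that paper. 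Second, the finishing step is fuzzy and would not work as written: to get $w(\Gamma)\le M$ you need $w$ to be a \emph{law} of $Q=\Gamma/M$, and the commutator word is not a law of a nontrivial perfect quotient, so ``commutators for the perfect part'' does not certify containment in $M$; the standard deduction instead uses that $M\supseteq \Gamma^{\ast q}$ (the abstract subgroup generated by $q$-th powers, $q=|\Gamma/M|$), whose closure has finite index by Zelmanov's solution of the restricted Burnside problem and which equals its closure by the width theorem --- or, alternatively, words that are laws of $Q$ together with local finiteness of the variety generated by $Q$. With those corrections your outline matches the known proof; reproducing the finite-group width theorem itself (which rests on the classification of finite simple groups) is rightly out of scope, so in effect your proposal and the paper rely on the same external citation.
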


One of the results in the paper on Hausdorff dimensions by Barnea and Shalev is the following:

\begin{lemma}\label{lemma_barneaShalev}
Let $G$ be a profinite group. If $H$ is an open subgroup of $G$, then $\dim_G(H)=1$ and if $H$ is a finite subgroup in
$G$, then $\dim_G(H)=0$.
\end{lemma}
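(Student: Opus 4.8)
The plan is to obtain both assertions directly from the Barnea--Shalev formula recalled above, namely
\[\dim_G(H)=\liminf_{n\to\infty}\frac{\log|H/(H\cap G_n)|}{\log|G/G_n|},\]
so that the whole proof reduces to controlling the numerator and the denominator as $n\to\infty$. First I would record the elementary facts that, since $G$ is profinite and $\{G_n\}$ is a base of neighbourhoods of the identity with $\bigcap_n G_n=\{1\}$, each index $|G/G_n|$ is finite, and --- under the standing assumption that $G$ is infinite --- $|G/G_n|\to\infty$, so that $\log|G/G_n|\to\infty$.

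The finite case is then immediate: if $|H|<\infty$, then $|H/(H\cap G_n)|\le|H|$ for every $n$, so the numerator $\log|H/(H\cap G_n)|$ is bounded above by the constant $\log|H|$, while the denominator tends to infinity. Hence the quotient tends to $0$, and a fortiori its $\liminf$ is $0$, giving $\dim_G(H)=0$.

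For the open case, I would use that an open subgroup $H$ of the compact group $G$ has finite index $[G:H]$, and that, $H$ being open and $\{G_n\}$ a neighbourhood base, there is an $N$ with $G_n\le H$ for all $n\ge N$. For such $n$ one has $HG_n=H$ and $H\cap G_n=G_n$, so the standard index identity gives
\[|G/G_n|=[G:HG_n]\cdot[HG_n:G_n]=[G:H]\cdot[H:H\cap G_n]=[G:H]\cdot|H/(H\cap G_n)|.\]
Taking logarithms, for $n\ge N$ we get
\[\frac{\log|H/(H\cap G_n)|}{\log|G/G_n|}=\frac{\log|G/G_n|-\log[G:H]}{\log|G/G_n|}=1-\frac{\log[G:H]}{\log|G/G_n|}\longrightarrow 1\]
as $n\to\infty$, since $[G:H]$ is a fixed constant and $\log|G/G_n|\to\infty$. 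Since this sequence converges, its $\liminf$ equals its limit, so $\dim_G(H)=1$.

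All the steps are elementary. The only point requiring a little care is the index identity $|G/G_n|=[G:H]\cdot|H/(H\cap G_n)|$, which is valid only once $G_n\le H$; justifying it uses that $H$ is open and that $\{G_n\}$ is a neighbourhood base of the identity, together with the (implicit) hypothesis that $G$ is infinite so that the denominators grow without bound. I do not anticipate any genuine obstacle beyond this bookkeeping.
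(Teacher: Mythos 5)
Your argument is correct, and there is nothing in the paper to conflict with: the paper states this lemma as an imported result of Barnea and Shalev and gives no proof of its own, so your derivation from the $\liminf$ formula of Theorem 2.2 is exactly the standard argument one would supply. Both halves check out: for finite $H$ the numerator $\log|H/(H\cap G_n)|$ is bounded by $\log|H|$ while the denominator grows, and for open $H$ the inclusion $G_n\le H$ for large $n$ (valid because $\{G_n\}$ is a neighbourhood base of the identity and the $G_n$ are descending) turns the index identity $[G:G_n]=[G:H][H:G_n]$ into the convergence of the quotient to $1$, so the $\liminf$ is a genuine limit. The one point worth making explicit, which you do flag, is that both statements need $\log|G/G_n|\to\infty$, i.e.\ $G$ infinite; for a finite profinite group the filtration stabilises and the formula gives $\log|H|/\log|G|$ rather than $0$ or $1$. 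Since infinite $G$ is the implicit standing assumption in the Hausdorff-dimension setting of Barnea and Shalev (and of this paper), your proof is complete as written.
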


Combining this with Theorem \ref{thm_DanNik}, we get

\begin{lemma}
Let $H$ be a subgroup of finite index in a finitely generated branch group $G$. Then $\dim_G(H)=1$.
\end{lemma}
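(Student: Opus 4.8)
The plan is to chain together two facts already available in the excerpt: the Nikolov--Segal theorem (Theorem~\ref{thm_DanNik}) and the Barnea--Shalev lemma on open subgroups (Lemma~\ref{lemma_barneaShalev}). The target statement is about a subgroup $H$ of finite index in a finitely generated branch group $G$, while Lemma~\ref{lemma_barneaShalev} speaks about \emph{open} subgroups of a \emph{profinite} group; so the work is all in transporting the hypothesis into the setting where that lemma applies.

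First I would pass to the profinite completion. Recall that, by the convention fixed earlier in the excerpt, $\dim_G(H)$ means $\dim_{\bar G}(\bar H)$, where $\bar G$ is the profinite completion of $G$ and $\bar H$ is the closure of (the image of) $H$ in $\bar G$. Since $G$ is a finitely generated branch group, it is in particular a finitely generated group, and $\bar G$ is a finitely generated profinite group. Now apply Theorem~\ref{thm_DanNik}: $\bar G$ is strongly complete, so in particular every finite-index subgroup of $\bar G$ is open. The subgroup $H$ of finite index $n$ in $G$ has image of finite index dividing $n$ in $\bar G$ (the natural map $G \to \bar G$ is injective for residually finite $G$, and a branch group is residually finite; even without injectivity the image of a finite-index subgroup has finite index in the image, which is dense), and its closure $\bar H$ then also has finite index at most $n$ in $\bar G$. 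By strong completeness this finite-index subgroup $\bar H$ is open in $\bar G$.

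Second, invoke Lemma~\ref{lemma_barneaShalev} with the profinite group taken to be $\bar G$ itself and the open subgroup taken to be $\bar H$: it gives $\dim_{\bar G}(\bar H) = 1$, which by the dimension convention is exactly $\dim_G(H) = 1$, as required. (One should note that the metric/filtration on $\bar G$ used throughout is the level-stabiliser filtration $G_i = \St_G(i)$ of the branch action; Lemma~\ref{lemma_barneaShalev} is stated for a general profinite group with its filtration, so no additional compatibility check is needed beyond observing that $\St_{\bar G}(i)$ is open of finite index, which is immediate since $\bar G$ acts on a spherically homogeneous tree with finite levels.)

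The only real obstacle is the bookkeeping around the map $G \to \bar G$: one must be slightly careful that ``finite index in $G$'' really does yield ``finite index, hence open, in $\bar G$.'' This is where residual finiteness of branch groups (so that $G \hookrightarrow \bar G$) together with density of $G$ in $\bar G$ is used, and it is the step where Theorem~\ref{thm_DanNik} does the essential lifting of "finite index" to "open". Everything else is a direct citation of the two preceding lemmas, so the proof is short.
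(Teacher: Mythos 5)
Your proof is correct and takes essentially the same route as the paper: Theorem~\ref{thm_DanNik} to upgrade finite index to openness, then Lemma~\ref{lemma_barneaShalev}; you simply make explicit the passage to the profinite completion that the paper leaves implicit. (As a side remark, since the dimension is computed for the closure $\bar H$, which is a closed finite-index subgroup of $\bar G$ and hence automatically open, the appeal to strong completeness is not strictly necessary---but citing it is harmless and mirrors the paper's argument.)
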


\begin{proof}
Theorem \ref{thm_DanNik} asserts that $H$ is open in $G$. Hence we can apply Lemma \ref{lemma_barneaShalev} to conclude
that $\dim_G(H)=1$.
\end{proof}

\begin{lemma}\label{lemma_chainRule}
Let $G$ be a finitely generated branch group and $H$ and $K$ subgroups such that \[\dim_G(H)=\alpha, \h
\dim_H(K)=\beta.\] If we assume that $H$ is again a branch group, then $\dim_G(K)=\alpha \cdot \beta$.
\end{lemma}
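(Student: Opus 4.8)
The plan is to feed all three Hausdorff dimensions into the formula of Barnea and Shalev \cite{barnea_shalev} and to factor the group-index quotients that appear; the single non-formal point is that the $\liminf$ of a product of two sequences need not be the product of the $\liminf$s of the factors. Equip $G$ with the filtration $G_n=\St_G(n)$, equip $H$ with its level-stabiliser filtration $H_n=\St_H(n)$ (which for the branch subgroups occurring here is just the filtration $H\cap G_n$ induced from $G$, as in the proof of Theorem~\ref{thm_alpha}), and equip $K$ with $K_n=\St_K(n)=K\cap G_n$. Since $K\le H\le G$ we have $K\cap H_n=K_n$, so for all large $n$
\[
\frac{\log|K/(K\cap G_n)|}{\log|G/G_n|}=a_n b_n,\qquad a_n:=\frac{\log|H/H_n|}{\log|G/G_n|},\qquad b_n:=\frac{\log|K/(K\cap H_n)|}{\log|H/H_n|}.
\]
By \cite{barnea_shalev} we have $\dim_G(H)=\liminf_n a_n=\alpha$, $\dim_H(K)=\liminf_n b_n=\beta$ and $\dim_G(K)=\liminf_n a_n b_n$, and $0\le a_n,b_n\le 1$ because $H/H_n$ embeds into $G/G_n$ and $K/(K\cap H_n)$ into $H/H_n$.

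Next I would isolate the purely elementary input: if $(a_n),(b_n)\subset[0,1]$ with $a_n\to\alpha$ and $\liminf_n b_n=\beta$, then $\liminf_n a_n b_n=\alpha\beta$. The inequality ``$\le$'' follows by passing to a subsequence on which $b_n\to\beta$; for ``$\ge$'', on any subsequence one has $a_{n_j}\to\alpha$ and $\liminf_j b_{n_j}\ge\beta$, hence $\liminf_j a_{n_j}b_{n_j}\ge\alpha\beta$. With this in hand the extreme values are immediate: if $\alpha=0$ then $a_n b_n\le a_n$ gives $\dim_G(K)=0=\alpha\beta$ (and $\beta=0$ is symmetric via $a_n b_n\le b_n$), while if $\alpha=1$ then $a_n\le 1$ forces $a_n\to 1$ and the elementary fact applies directly (and $\beta=1$ is symmetric). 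So the whole statement reduces to proving, for $\alpha,\beta\in(0,1)$, that the sequence $(a_n)$ converges, its limit then necessarily being $\alpha$.

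Establishing $a_n\to\alpha$ is the step I expect to be the main obstacle, and it is where the hypothesis that $H$ is again a branch group is used: $|H/H_n|$ and $|G/G_n|$ are then orders of iterated wreath products of alternating groups along the two level sequences, and one can run a Stirling-type estimate of exactly the kind carried out in the proof of Theorem~\ref{thm_alpha}, squeezing $a_n$ between two quantities of the form $\gamma_n(1+o(1))$ and $\gamma_n(1-o(1))$, where $\gamma_n$ is the finite-level approximation with $\gamma_n\to\alpha$; here one uses the lower bound $l_i\ge 5$ and that at each level the alternating groups through which $H$ and $G$ act have degrees differing by a bounded amount. In the applications of this lemma $H$ and $K$ are themselves the spinal subgroups produced by Theorem~\ref{thm_alpha}, for which this squeeze was already carried out, so there both $(a_n)$ and $(b_n)$ converge on the nose and no new computation is needed; it is only the general formulation that requires the convergence argument above. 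Combining $a_n\to\alpha$ with the elementary fact of the previous paragraph then yields $\dim_G(K)=\liminf_n a_n b_n=\alpha\beta$.
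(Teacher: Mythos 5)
Your factorization of the Barnea--Shalev quotient into $a_n b_n$, with $a_n=\log|H/\St_H(n)|/\log|G/\St_G(n)|$ and $b_n=\log|K/\St_K(n)|/\log|H/\St_H(n)|$, is exactly the decomposition the paper itself uses, and your reduction is carried out more carefully than in the paper: you isolate the correct elementary fact (if $a_n\to\alpha$ and $\liminf_n b_n=\beta$ with $0\le a_n,b_n\le 1$, then $\liminf_n a_nb_n=\alpha\beta$), you handle the boundary cases $\alpha,\beta\in\{0,1\}$ correctly, and you identify that everything hinges on the convergence of $(a_n)$, since the liminf of a product is in general not the product of the liminfs.

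That convergence is precisely where your argument has a gap, and the sketch you give does not close it. The lemma is stated for an arbitrary subgroup $H$ of a finitely generated branch group $G$, assumed only to be itself a branch group with $\dim_G(H)=\alpha$; nothing in these hypotheses says that $H$ and $G$ act at each level through alternating groups whose degrees differ by a bounded amount, nor gives any other control of $|H/\St_H(n)|$ against $|G/\St_G(n)|$ beyond the liminf. So the Stirling squeeze you appeal to is available only for the specific spinal subgroups of Theorem \ref{thm_alpha}, and your proposal proves the lemma only under the extra hypothesis that $(a_n)$ (or $(b_n)$) converges, not in the stated generality; without such a hypothesis the identity $\liminf_n a_nb_n=(\liminf_n a_n)(\liminf_n b_n)$ can genuinely fail (let $a_n$ alternate between $\tfrac12$ and $\tfrac14$ while $b_n$ alternates between $\tfrac14$ and $\tfrac12$). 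You should know, however, that the paper's own proof is the same factorization with the same defect: it simply asserts that ``the limit of both products exists by the assumptions $\dim_G(H)=\alpha$ and $\dim_H(K)=\beta$'', which the liminf definition of Hausdorff dimension does not warrant (it also writes $\limsup$ where the filtration formula of \cite{barnea_shalev} has $\liminf$). So your diagnosis of the crux is accurate and your write-up is the more honest one, but as a proof of the lemma as stated it is incomplete in exactly the place where the paper is; in the intended application, where $H$ and $K$ are the subgroups produced by Theorem \ref{thm_alpha} and the relevant ratios do converge, both arguments go through.
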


\begin{proof}
 This follows straight from
\[dim_{G}(K) = \lim sup_{n \rightarrow \infty} \frac{\log(|K/\St_K(n)|)}{\log (|G/\St_{G}(n)|)} = \lim sup_{n
\rightarrow \infty} \frac{\log(|H/\St_H(n)|)}{\log (|G/\St_{G}(n)|)} \cdot \frac{\log(|K/\St_K(n)|)}{\log
(|H/\St_{H}(n)|)}\]
\[=\lim sup_{n \rightarrow \infty} \frac{\log(|H/\St_H(n)|)}{\log (|G/\St_{G}(n)|)} \cdot \lim sup_{n
\rightarrow \infty} \frac{\log(|K/\St_K(n)|)}{\log (|H/\St_{H}(n)|)}=\dim_G(H) \cdot \dim_H(K)\] because the limit of
both products exists by the assumptions $\dim_G(H)=\alpha$ and $\dim_H(K)=\beta$.
\end{proof}

Let $\mathcal{L}$ be the set of rationals

\begin{equation}\label{eq_L}
\mathcal{L} = \left\{q \h | \h q \in [0,1] \cap \mathbb{Q}, \exists \left\{j_1, \dots,j_{r_q}\right\}\subset
\mathbb{N} \h \mbox{with} \h q \cdot \prod_{k=1}^{r_q} \left(l_{j_k}-2\right) \in \mathbb{Z}\right\},\end{equation}

the set of all rational numbers $q \in [0,1] \cap \mathbb{Q}$ such that there exists a set $\left\{j_1, \dots,
j_{r_q}\right\} \subset \mathbb{N}$ with $q \cdot \prod_{k=1}^{r_q} \left(l_{j_k}-2\right) \in \mathbb{Z}$. 

\begin{prop}\label{prop_rationalDim}
Let $G$ be a finitely generated branch group. For every $\delta \in \mathcal{L}$ there exists a finitely generated
subgroup $H$ with $\dim_G(H)=\delta$.
\end{prop}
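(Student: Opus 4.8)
The plan is to realise each $\delta \in \mathcal{L}$ as a ratio of subgroup-growth rates by iterating the construction of Theorem~\ref{thm_alpha} through a finite window of levels and then attaching a full (co-finite-index) piece. Fix $\delta \in \mathcal{L}$, so there is a finite set $\{j_1,\dots,j_{r}\} \subset \mathbb{N}$ with $\delta \cdot \prod_{k=1}^{r}(l_{j_k}-2) = N \in \mathbb{Z}$. First I would observe that $0 \le N \le \prod_{k=1}^{r}(l_{j_k}-2)$, so $N$ can be written as a product (with multiplicity) of integers each of the form $(l_{j_k}-2)$ or smaller; more precisely, the point of Remark~\ref{remark_primes} is that choosing the $l_i$ with many prime factors makes $\mathcal{L}$ rich, and for the proof one only needs that $\delta$ is a quotient $\prod (a_k) / \prod (l_{j_k})$ of the relevant alternating-group orders after passing to logarithms. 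The key tool is Lemma~\ref{lemma_chainRule}: if I can build a chain $G \ge H_1 \ge H_2$ with $H_1$ a branch group of dimension $\delta' \in \mathcal{L}$ and $\dim_{H_1}(H_2)$ computed to be the missing factor, then $\dim_G(H_2) = \delta$.

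The key steps, in order. \textbf{(1)} Reduce to the case where $\delta = \prod_{k=1}^{r}\tfrac{a_k}{l_{j_k}}$ for integers $5 \le a_k \le l_{j_k}$: by clearing denominators and using that $\mathcal{L}$ is closed under the operations needed, any $\delta \in \mathcal{L}$ admits such a presentation, possibly after absorbing a rational factor that is itself $1$ or handled trivially. \textbf{(2)} At each level index $j_k$ in the chosen finite window, replace the alternating group $A_{l_{j_k}}$ acting on $l_{j_k}$ points by an embedded alternating subgroup $A_{a_k}$ acting on $a_k \le l_{j_k}$ points, exactly as in Theorem~\ref{thm_alpha} where $\Alt(n-2) \le \Alt(n)$ was generated by the explicit elements $\kappa_n, \rho_n$; here one uses instead that $\Alt(m) \le \Alt(n)$ for any $m \le n$ is generated by a $3$-cycle and an $m$-cycle supported on the first $m$ points, and these are again words in $\sigma_{l_{j_k}}, \tau_{l_{j_k}}$. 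At all other levels (the infinitely many $i \notin \{j_1,\dots,j_r\}$) keep the full group $A_{l_i}$. \textbf{(3)} The resulting subgroup $H$ is finitely generated (the recursive generators $\xi, \theta$ eventually act as the full $\zeta, \psi$ below the window, so only finitely many generators are needed), and it is a spinal, hence branch, subgroup. \textbf{(4)} Compute $\dim_G(H)$ via Theorem~2.3: since the modification affects only finitely many levels, the quotients $|H/\St_H(n)| / |G/\St_G(n)|$ stabilise, up to the relevant logarithmic-ratio analysis already carried out in Theorem~\ref{thm_alpha}, to $\prod_{k=1}^{r} \tfrac{\log(a_k!/2)}{\log(l_{j_k}!/2)}$-type factors — and here is where I would invoke the Stirling estimate~\eqref{eq_stirling} together with the same $T_1, T_2 \to 0$ bookkeeping as in the main theorem, which forces the dimension to equal the product of the "naive" point-count ratios $\prod_k a_k / \prod_k l_{j_k}$, i.e.\ exactly $\delta$. \textbf{(5)} If the presentation in step~(1) only gives $\delta$ up to a further rational factor, close the gap with Lemma~\ref{lemma_chainRule} by nesting one more branch subgroup of the appropriate dimension inside.

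The main obstacle I expect is step~(2) together with the exact finite-level computation in step~(4): one must check that an arbitrary integer $N$ in the right range is actually achievable as $\prod_k a_k$ with each $a_k \le l_{j_k}$ and each $a_k \ge 5$ (so that the alternating groups are simple and the Stirling bounds from Theorem~\ref{thm_alpha} apply verbatim), and this is precisely where Remark~\ref{remark_primes} is used — having many prime factors in each $l_{j_k}$ guarantees enough flexibility to factor $N$ appropriately, possibly after enlarging the window $\{j_1,\dots,j_r\}$. The subtlety is that the dimension formula is a $\liminf$ of a ratio of sums of logarithms weighted by products $\prod l_j$, not a clean product of fractions, so one genuinely needs the estimates $\lim \tfrac{-\log A}{\log C} = 0$, $\lim \tfrac{\log D}{\log A} = \infty$, $\lim \tfrac{-\log B}{\log C}=0$ and $T_1, T_2 \to 0$ from the proof of Theorem~\ref{thm_alpha} to survive the modification; since those estimates only used $l_i \ge 5$ and the tail behaviour, they carry over, but this must be stated rather than glossed. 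Once the finite-window contribution is isolated as an honest rational number, the rest is routine.
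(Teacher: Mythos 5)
Your proposal takes a genuinely different route from the paper, and it has a gap that I do not think can be closed. The paper never touches the spinal construction here: since $G$ is a branch group, the rigid level stabilizer $\rst_G(n)=\prod_{i=1}^{m_n}\rst_G(v_i)$ has finite index in $G$, hence is finitely generated, and for $k\le m_n$ the subproduct $H=\prod_{i=1}^{k}\rst_G(v_i)$ is a quotient of it, hence finitely generated, with $\dim_G(H)=k/m_n$; the defining property of $\mathcal{L}$ is exactly what is needed to write $\delta=k/m_n$ for a suitable level $n$ and integer $k$. That is the whole proof, and it needs nothing from Theorem \ref{thm_alpha} beyond branchness and finite generation.

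The gap in your argument is step (1), on which everything else rests. The dimensions your finite-window modification can produce are, by your own step (4), of the form $\prod_k a_k/l_{j_k}$ with integers $a_k\le l_{j_k}$, so their denominators are built from the numbers $l_j$, whereas $\mathcal{L}$ consists of rationals whose denominators are built from the numbers $l_j-2$ (these are the branching numbers of the subgroup $H$ to which the proposition is actually applied in Theorem \ref{thm_spectrum}). Already $\delta=1/(l_{j}-2)\in\mathcal{L}$ is unreachable whenever $l_j-2$ has a prime factor dividing no member of the sequence $\{l_i\}$: enlarging the window multiplies your denominator only by further $l_i$'s, so the required divisibility can never be created, and since $\gcd(l_j,l_j-2)\le 2$ the level $j$ itself does not help. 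Remark \ref{remark_primes} is a heuristic about choosing $\{l_i\}$, not a statement you may invoke, and the sequence is in any case already constrained by Lemma \ref{lemma_alpha}; moreover, even with matching branching numbers, $\mathcal{L}$ contains $N/\prod_k(l_{j_k}-2)$ for \emph{every} integer $N$ in range, and such $N$ need not factor as $\prod_k a_k$ with each factor bounded by the corresponding $l_{j_k}$. Your step (5) fallback (``nest one more branch subgroup of the appropriate dimension'') is circular, since producing a branch subgroup of a prescribed dimension in $\mathcal{L}$ is precisely the statement being proved. A secondary, also unaddressed, point: the identity $\xi=\psi^{-2}\zeta\psi^{2}$ in Theorem \ref{thm_alpha} works only because the same word is applied at every level simultaneously; your modified generators are non-uniform spinal elements (different companions at the window levels, the full $\tau,\sigma$ below), and their membership in $G$ and the finite generation of the subgroup they generate would need a separate argument via the branch structure --- at which point the paper's direct rigid-stabilizer argument is both simpler and already sufficient.
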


\begin{proof}
We follow a similar idea as Klopsch in his thesis (\cite{klopsch}), using the rigid level stabilizers of $G$. Those
have, by the hypothesis that $G$ is a branch group, finite index in $G$, hence are again finitely generated. The
subgroup $\rst_G(n)$ is the direct product $\rst_G(n) = \prod_{i=1}^{m_n} \rst_G(v)$ where $v$ is a vertex of level $n$.
It
follows straight from the notion of a Hausdorff dimension in branch groups that $H = \prod_{i=1}^{k}$ has dimension
$\dim_G(H)=\frac{k}{m_n}$ in $G$. The desired dimension $\delta$ can be written as $\delta = \frac p q = \frac{1}{m_n}
\cdot \frac{m_n a}{b}=\frac{\zeta \cdot a}{m_n}$ with $\zeta = \frac{m_n}{b}$ for every $n \geq 0$. By assumption there
exists $n_0$ such that $\zeta \in \mathbb{Z}$ for all $n \geq n_0$. Hence $\dim_G(H) = \delta$.
\end{proof}

We now see that a good choice of the sequence $\left\{l_i\right\}_{n \geq 0}$ allows the construction of a richer
spectrum as remarked in \ref{remark_primes}. Using Proposition \ref{prop_rationalDim} we can then obtain a more detailed
description of the finitely generated Hausdorff spectrum of $G$, using the definition of $\mathcal{L}$ from
\eqref{eq_L}.

\begin{theorem}\label{thm_spectrum}
For every $\alpha \in [0,1]$ there exists a branch group $G_\alpha$ such that \[\mathcal{L}_\alpha \cup
\mathcal{L} \subseteq \spec^{fg}_H(G),\] where $\mathcal{L}_\alpha =
\left\{l \cdot \alpha | \h l \in \mathcal{L}\right\}$.
\end{theorem}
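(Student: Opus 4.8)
The plan is to combine the three main ingredients already assembled: Theorem~\ref{thm_alpha}, which for our fixed sequence $\left\{l_i\right\}$ produces a finitely generated branch subgroup $H_\alpha \leq G_\alpha$ with $\dim_{G_\alpha}(H_\alpha) = \alpha$; Proposition~\ref{prop_rationalDim}, which inside \emph{any} finitely generated branch group realises every value of $\mathcal{L}$ (computed relative to the same sequence) as the dimension of a finitely generated subgroup; and Lemma~\ref{lemma_chainRule}, which multiplies dimensions along a chain $K \leq H \leq G$ whenever $H$ is itself a branch group. The point of the last clause of Theorem~\ref{thm_alpha} — that $H_\alpha$ is again a finitely generated branch group, acting on the tree with defining sequence $\left\{l_n-2\right\}_{n\geq 0}$ — is exactly what licenses applying Proposition~\ref{prop_rationalDim} \emph{inside} $H_\alpha$ and then the chain rule.

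Concretely, I would argue as follows. First fix $\alpha \in [0,1]$ and the group $G_\alpha$ with defining sequence $\left\{l_i\right\}$ from Theorem~\ref{thm_alpha}; by that theorem there is a finitely generated branch subgroup $H_\alpha \leq G_\alpha$ with $\dim_{G_\alpha}(H_\alpha) = \alpha$. Next, given any $\ell \in \mathcal{L}$, apply Proposition~\ref{prop_rationalDim} to the finitely generated branch group $H_\alpha$: this yields a finitely generated subgroup $K \leq H_\alpha$ with $\dim_{H_\alpha}(K) = \ell$. Since $H_\alpha$ is a branch group, Lemma~\ref{lemma_chainRule} gives $\dim_{G_\alpha}(K) = \dim_{G_\alpha}(H_\alpha)\cdot\dim_{H_\alpha}(K) = \alpha\cdot\ell$, and $K$ is finitely generated as a subgroup of the finitely generated group $H_\alpha$. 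As $\ell$ ranges over $\mathcal{L}$ this realises every element of $\mathcal{L}_\alpha = \left\{\ell\alpha \mid \ell \in \mathcal{L}\right\}$ in $\spec_H^{fg}(G_\alpha)$. Separately, applying Proposition~\ref{prop_rationalDim} directly to $G_\alpha$ realises every element of $\mathcal{L}$, so $\mathcal{L}\cup\mathcal{L}_\alpha \subseteq \spec_H^{fg}(G_\alpha)$. (The boundary cases $\alpha=0,1$ are harmless: for $\alpha=1$ one has $\mathcal{L}_1=\mathcal{L}$, and for $\alpha=0$ one gets $\mathcal{L}_0=\{0\}$, already in the spectrum.)

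The subtle point — really the only thing that needs care rather than bookkeeping — is the \emph{compatibility of the set $\mathcal{L}$ with the chain rule}. Proposition~\ref{prop_rationalDim} as stated realises $\delta\in\mathcal{L}$ in a branch group $G$ by taking partial products of rigid vertex stabilisers of $G$; but the quantity $m_n = |\Omega(n)|$ for $H_\alpha$ is $\prod_{i=0}^{n-1}(l_i-2)$, whereas the definition~\eqref{eq_L} of $\mathcal{L}$ is phrased precisely in terms of the factors $l_{j_k}-2$. So the clearing-of-denominators argument inside $H_\alpha$ needs $\delta = p/q$ with $q$ dividing some $\prod_{k=1}^{r}(l_{j_k}-2)$ — which is exactly the membership condition $\ell\in\mathcal{L}$. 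I would make this explicit: the $\mathcal{L}$ attached to $G_\alpha$ and the $\mathcal{L}$ attached to $H_\alpha$ coincide because both are determined by the common factor-sequence $\left\{l_i-2\right\}$ appearing in the level sizes of $H_\alpha$ and in~\eqref{eq_L}. Once that identification is in place, Proposition~\ref{prop_rationalDim} applies verbatim to $H_\alpha$ and the chain-rule step goes through; the rest is the formal combination above. I would also remark, as the paper's introduction already does, that $\spec_H^{fg}(G_\alpha)$ is necessarily countable, consistent with $\mathcal{L}$ and $\mathcal{L}_\alpha$ being countable sets.
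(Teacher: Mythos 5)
Your proposal is correct and follows essentially the same route as the paper: Theorem~\ref{thm_alpha} supplies the finitely generated branch subgroup $H_\alpha$ of dimension $\alpha$, Proposition~\ref{prop_rationalDim} is applied inside $H_\alpha$ (and to $G_\alpha$ itself), and Lemma~\ref{lemma_chainRule} multiplies the dimensions to obtain $\mathcal{L}_\alpha$. Your explicit check that the set $\mathcal{L}$, defined via the factors $l_{j_k}-2$, is exactly adapted to the level sizes $\prod(l_i-2)$ of $H_\alpha$, and your separate treatment of $\mathcal{L}$ and of the boundary cases $\alpha=0,1$, only make explicit points that the paper's terser proof leaves implicit.
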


\medskip

\begin{figure}[ht!]
\centering

\labellist
\pinlabel \LARGE{$\alpha$} at 250 45
\pinlabel \LARGE{$1$} at 380 45
\pinlabel \LARGE{$0$} at 5 45
\pinlabel {$\mathcal{L}_\alpha \cup \left([0,\alpha] \cap \mathcal{L}\right)$} at 120 35
\pinlabel {$[\alpha,1] \cap \mathcal{L}$} at 310 35
\endlabellist

\includegraphics[scale=0.7]{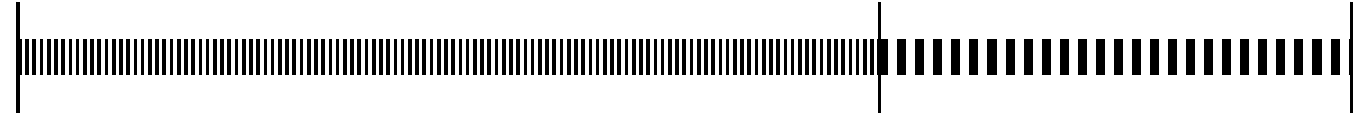}
\caption{Hausdorff spectrum of $G_\alpha$.}\label{figure_spectrum}
\end{figure}

\begin{proof}
If $\alpha \in \mathcal{L}$, then we apply Proposition \ref{prop_rationalDim}. Otherwise, Theorem \ref{thm_alpha}
yields that there exists a finitely generated subgroup $H$ with $\dim_{G_\alpha}(H)=\alpha$,
that is itself a branch group. Therefore by Proposition \ref{prop_rationalDim} there exists $K \leq H$ with
$\dim_H(K)=\delta$ for every $\delta \in \mathcal{L}$. Lemma \ref{lemma_chainRule} now asserts that
$\dim_{G_\alpha}(K)=\alpha \cdot \delta$.
\end{proof}

\medskip 
email: fink@maths.ox.ac.uk

\end{document}